\newcommand{\eps}{\varepsilon}
\newcommand{\E}{{\mathbb{E}}}
\newcommand{\U}{{\mathcal{U}}}
\renewcommand{\P}{{\mathcal{P}}}
\newcommand{\B}{{\mathcal{B}}}
\newcommand{\EE}{{\mathcal{E}}}
\newcommand{\LL}{{\mathcal{L}}}
\newcommand{\p}{{\overline{p}}}
\newcommand{\conv}{{\rm conv}}
\newcommand{\intr}{{\rm int}}
\renewcommand{\phi}{{\boldsymbol{\varphi}}}
\newtheorem{theorem}{Theorem}[section]
\newtheorem{lemma}[theorem]{Lemma}
\newtheorem{proposition}[theorem]{Proposition}
\newtheorem{corollary}[theorem]{Corollary}
\theoremstyle{remark}
\newtheorem{remark}[theorem]{Remark}
\title{A new bound for Hadwiger's covering problem in $\mathbb{E}^3$}
\author{A.\ Prymak}
\address{Department of Mathematics, University of Manitoba, Winnipeg, MB, R3T 2N2, Canada}
\email{prymak@gmail.com}
\thanks{Corresponding author: Andriy Prymak; email: {\tt prymak@gmail.com} \\ \phantom{fff} The author was supported by NSERC of Canada Discovery Grant RGPIN-2020-05357.}
\keywords{Illumination problem, illumination number, covering number, covering by smaller homothetic copies, convex body, linear programming}
\subjclass[2010]{Primary 52A15; Secondary 52A37, 52A40, 52C17, 90C05}
\begin{document}

\begin{abstract}
We show that every $3$-dimensional convex body can be covered by $14$ smaller homothetic copies. The previous result was $16$ established by Papadoperakis in 1999, while a conjecture by Hadwiger is $8$. We 
develop a discretization technique showing that it suffices to verify feasibility of a number of linear programs with rational coefficients, which is done with computer assistance using exact arithmetic.
\end{abstract}

\maketitle

\section{Introduction}

A convex body in the $n$-dimensional Euclidean space $\E^n$ is a convex compact set having non-empty interior. Let $C(A,B)$ be the smallest number of translates of $B\subset\E^n$ required to cover $A\subset\E^n$, and let $\intr(A)$ denote the interior of $A$. Hadwiger~\cite{Ha} asked what is the value $H_n=\min C(K,\intr(K))$, where the minimum is taken over all convex bodies $K$ in $\E^n$. In other words, what is the least number of smaller homothetic copies of an arbitrary convex body $K$ in $\E^n$ needed to cover $K$? It was shown by Boltyanski~\cite{Bo} that this question is equivalent to finding the smallest number of external light sources required to illuminate the boundary of every convex body. It is immediate that $H_n\ge 2^n$ by taking $K$ as an $n$-dimensional cube. A well-known conjecture, which is commonly referred to as the (Levi-)Hadwiger conjecture or as the Gohberg-Markus covering conjecture, is that $H_n=2^n$, but this is known (see Levi~\cite{Le}) only for $n=2$. Below we give a brief overview of the known results about $H_n$. For a detailed history of the question and the survey including many partial results for special classes of convex bodies see, e.g., \cite{Be}. 

The best known explicit upper bound on $H_n$ in high dimensions is a combination of Rogers's result~\cite{Ro} on the covering density of $\E^n$ by translates of arbitrary convex body with the Rogers-Shephard inequality~\cite{Ro-Sh} bounding the volume of the difference body. An interested reader is referred to~\cite{Be}*{Section~2.2} for further details and related results. Here we only state the actual bound, which is
\begin{equation}\label{eqn:asymptotic}
	H_n\le \binom{2n}{n}n(\ln n+\ln\ln n+5),
\end{equation}
where $5$ can be replaced by $4$ for sufficiently large $n$. (Note that the asymptotic behavior of $\binom{2n}{n}$ is $\frac{4^n}{\sqrt{2\pi n}}$.) This was the best asymptotic estimate on $H_n$ for a long time until Huang, Slomka, Tkocz and Vritsiou~\cite{HSTV} recently obtained a remarkable asymptotic improvement of~\eqref{eqn:asymptotic} by showing $H_n\le \exp(-c\sqrt{n}) 4^n$ using ``thin-shell'' volume estimates, here and below $c>0$ is an implicit universal constant. With the aid of the recent results on the Bourgain slicing problem, this was further improved by Campos, van Hintum, Morris and Tiba~\cite{CHMT} leading to the currently best known asymptotic bound on $H_n$:
\begin{equation*}
H_n\le \exp\Bigl(\frac{-cn}{\ln^8 n}\Bigr)4^n.
\end{equation*}

For low dimensions, Lassak~\cite{La} showed that
\begin{equation}\label{eqn:lassak}
	H_n\le (n+1)n^{n-1}-(n-1)(n-2)^{n-1},
\end{equation}
which is better than~\eqref{eqn:asymptotic} for $n\le 5$. For $n=3$ \eqref{eqn:lassak} is $H_3\le 34$, which was improved to $H_3\le20$ by Lassak~\cite{La2}, and then to $H_3\le 16$ by Papadoperakis~\cite{Pa}. Generalizing the technique of~\cite{Pa} to the higher dimensions, in the joint work with Shepelska~\cite{Pr-Sh}, we showed that $H_4\le 96$, $H_5\le 1091$ and $H_6\le 15373$ improving both~\eqref{eqn:asymptotic} and~\eqref{eqn:lassak} for $n=4,5,6$. Diao~\cite{D} brought the last two inequalities down to $H_5\le 1002$ and $H_6\le 14140$.

The main result of this work is that $H_3\le 14$.

\begin{theorem}
	\label{thm:main}
	For any convex body $K\subset\E^3$, $C(K,\intr(K))\le 14$.
\end{theorem}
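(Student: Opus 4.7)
The plan is to extend the approach of Papadoperakis~\cite{Pa} (who proved $H_3 \le 16$) by introducing a finer combinatorial partition of the boundary $\partial K$ together with a systematic linear-programming verification in exact rational arithmetic, as the abstract advertises. I would first normalize $K$ affinely---for instance, inscribing it in a canonical parallelepiped that $K$ touches from inside on each face---and then approximate it by polytopes $P$ whose vertices lie on a sufficiently fine rational lattice inside that parallelepiped. Since $C(K,\intr(K))$ is affine-invariant, this normalization loses no generality. Combined with a slack margin in the linear programs to absorb small Hausdorff perturbations, this reduces \Cref{thm:main} to verifying $C(P,\intr(P)) \le 14$ for each polytope $P$ in a large but finite family.

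The geometric core is to design a scheme that decomposes $\partial P$ into at most $14$ pieces $F_i = \partial P \cap R_i$ such that there exist $\lambda \in (0,1)$ and translations $t_i \in \E^3$ with $F_i \subset \lambda P + t_i$ for every $i$---equivalent, via the Boltyanski correspondence, to choosing $14$ illumination directions that together illuminate $\partial P$. For a fixed combinatorial type of such a scheme (that is, a fixed arrangement of the regions $R_i$ relative to the face lattice of $P$), each inclusion $F_i \subset \lambda P + t_i$ becomes a conjunction of linear inequalities in the unknowns $(t_1,\dots,t_{14},\lambda)$ with rational coefficients determined by the vertices of $P$. The existence of a covering is then exactly the feasibility of a rational linear program, which a computer can decide in exact arithmetic.

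The principal obstacle is the balance between two competing pressures. On one hand, the $14$-piece scheme must be robust enough that a modest number of combinatorial types suffices for every normalized polytope $P$ in the family; on the other, the lattice discretization has to be fine enough, and the LP slack generous enough, that the finite catalogue of linear programs rigorously accounts for every continuous deformation within each type. Choosing the normalization and the family of schemes so that these two pressures can be simultaneously accommodated, while keeping the total number of linear programs within reach of exact-arithmetic solvers, is where the genuine difficulty resides; once that design is fixed, the remainder of the proof is mechanical verification.
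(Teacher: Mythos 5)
Your proposal shares the paper's superficial outline---affine normalization inside a parallelepiped, reduction to rational linear programs, exact-arithmetic verification---but it is missing the one idea that makes the computation possible at all, and the step you wave at with ``slack margin to absorb small Hausdorff perturbations'' is a genuine gap, not a detail.

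The paper does \emph{not} discretize the space of convex bodies by lattice polytopes approximating $K$. After normalizing so that the minimal-volume circumscribed parallelotope is $[0,1]^3$, it invokes a lemma of Papadoperakis which says that the covering number of $K$ is bounded by the covering number of a tiny, essentially $1$-dimensional target: the $1$-skeleton $E$ of the cube together with six specially chosen points $V_p$ on the faces, forming three diametrically opposite pairs. The parameter $p$ lives in $[0,1]^6$. One then covers $E\cup V_p$ not by homothets of $K$ but by translates of the interior of a fixed polytope $O_p\subset K$ (the convex hull of $V_p$). The entire problem thereby collapses from ``all convex bodies up to affine equivalence''---an infinite-dimensional family---to a $6$-parameter family of covering problems where the target has a simple, explicit, one-dimensional structure. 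That reduction is the whole ballgame. Your plan instead proposes to enumerate rational-lattice polytopes fine enough to approximate every normalized $K$ and to verify $C(P,\intr P)\le 14$ for each. Even granting everything else, this is not a finite task in any practical sense: the space of shapes you would need to cover is far too large for any discretization to be exhaustively checked.

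The Hausdorff-perturbation step also does not work as described. Knowing $C(P,\intr(P))\le 14$ for a dense (or fine mesh of) family of polytopes $P$ near $K$ does not by itself give $C(K,\intr(K))\le 14$: illumination by a fixed set of directions is an \emph{open} condition in the Hausdorff metric, so the implication naturally runs from the smooth body to its nearby approximants, not the other way. To go from approximants back to $K$, you would need a quantitative stability statement with uniform margins depending on $K$, and you give no indication of how to obtain one. The paper sidesteps this entirely: it never approximates $K$. Instead, \cref{lem:inside} handles the analogous open-versus-closed issue in the $6$-dimensional parameter space, showing that the polytope $Q_P=\bigcap_{v\in U_P}O_v$ (the intersection over the vertices of a small box $P\subset[0,1]^6$) both sits inside $O_p$ and, crucially, has a translate inside $\intr(O_p)$ for every $p\in P$. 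That is what lets non-strict linear inequalities (hence LP feasibility) certify covers by \emph{open} sets. You would need an analogue of this argument, and it is not visible in your sketch.

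Finally, the covering structure is different. The paper does not carve $\partial P$ into $14$ pieces; it assigns one translate to each of the $8$ vertices of the cube and one translate to each of the $6$ faces (covering the rectangle swept by $V_p$ plus a ``middle'' portion of one chosen edge), and then shows each edge is covered by the two vertex-translates at its ends, possibly together with one face-translate. Because $E\cup V_p$ is one-dimensional, membership of segment endpoints and a few intermediate points in the $Q_P$-translates suffices, which is exactly why the whole thing becomes a linear feasibility problem in a bounded number of variables. Covering a $2$-dimensional boundary $\partial P$ by $\lambda P + t_i$ with $\lambda$ as an unknown is not a linear program in $(t_i,\lambda)$, and it is not clear how you would make it one.
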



 Use of computer for the covering conjecture has been suggested in~\cite{Zong}, although that approach does not appear to be feasible with current computational power. Recently computer assistance was used in~\cite{BPR} in a different way to confirm the covering conjecture for the class of convex bodies of constant width in dimensions $6\le n\le 15$.
 
 The discretization method we offer here is new and may be of independent interest for other affine-invariant problems involving three-dimensional convex bodies. More precisely, we construct a finite family of convex polytopes with relatively simple structure and the property that any three-dimensional convex body contains an affine copy of one of these polytopes as well as is contained in a certain related parallelepiped. 

In \cref{sec:discr}, we begin with the description of the approach from~\cite{Pa} which is the starting point of our proof. Next we introduce a certain family of polytopes which are larger than the boxes considered in~\cite{Pa} allowing more efficient covering (\cref{lem:pap-strong}). We proceed with a certain geometric argument (\cref{lem:inside}) to transition from covering by open sets to covering by closed sets. This transition is one of the ingredients needed to enable the use of linear programming to verify the existence of the translates necessary for covering, the other one being the one-dimensional structure of the edges of the related parallelepiped. We conclude with \cref{sec:comp}, where various techniques making the required computations feasible are described.

\section{Discretization and reduction to linear programming}\label{sec:discr}

For a set $A\subset \E^n$ and a family $\B$ of subsets of $\E^n$, we let $C(A,\B)$ be the smallest number of translates of elements of $\B$ required to cover $A$.

Everywhere below ``the cube'' is the unit cube $[0,1]^3$, the faces of the cube are the $2$-dimensional faces $F_{ij}=\{(x_1,x_2,x_3)\in [0,1]^3:x_i=j\}$, $i=1,2,3$, $j=0,1$. Let $E$ denote the $1$-skeleton of the cube, which is the union of all edges and can also be viewed as the union of all relative boundaries of the faces $F_{ij}$.

A very important role in the proof will be played by certain configurations of points from the faces of the cube. Namely, we select a point on each face so that the segment joining the points in each pair of the opposite faces is always perpendicular to these faces. It is convenient to introduce a notation to describe such configurations. For $p\in[0,1]^6$, define
\[
A_p=\begin{pmatrix}
	0 & 1 & p_3 & p_3 & p_5 & p_5 \\
	p_1 & p_1 & 0 & 1 & p_6 & p_6 \\
	p_2 & p_2 & p_4 & p_4 & 0 & 1
\end{pmatrix}.
\]
Now if $e_i$ denotes the $i$-the basic unit vector, then the configurations mentioned above are $V_p:=A_p(\{e_1,\dots,e_6\})$. We also need to work with closed convex hulls of these configurations, so we set $O_p:=A_p(S)$, where $S=\{(\lambda_1,\dots,\lambda_6):\lambda_i\ge0,\sum\lambda_i=1\}$.

Now let us describe the technique of~\cite{Pa}. For arbitrary convex body $K$ in $\E^3$, consider the parallelotope $C$ of minimal volume containing $K$. Without loss of generality (using an affine transform), we can assume that $C=[0,1]^3$. Clearly, each $F_{ij}\cap K$ is non-empty, moreover, by~\cite{Pa}*{Lemma~3(a)} we can choose $q_{ij}\in F_{ij}\cap K$ so that $q_{i0}+e_i=q_{i1}$ for every $i$. In our notations, this means that the six points $\{q_{ij}, i=1,2,3, j=0,1\}$ form a configuration described above, i.e., are exactly the column vectors of $A_p$ for some $p\in[0,1]^6$. Let $Q=\{q_{ij}, i=1,2,3, j=0,1\}=V_p$, then $O_p=\conv(Q)\subset K$. Let $\P$ be the family of rectangular parallelotops in $\E^3$ whose edges are parallel to coordinate axes and the sum of the three dimensions (lengths of edges adjacent to a vertex) is strictly less than one. By~\cite{Pa}*{Lemma~3(b)}, for any $P\in\P$ there is a translate of $P$ which is a subset of $\intr(K)$ (in fact, a subset of $\intr(O_p)$ for any $p\in[0,1]^6$). The key argument of~\cite{Pa} is the following lemma. Recall that $E$ is the $1$-skeleton of the cube.
\begin{lemma}[\cite{Pa}*{Lemma~4}]\label{lem:pap}
For any convex body $K$ in $\E^3$
	\[
	C(K,\intr(K)) \le \max\{ C(E\cup V_p,\P): p\in[0,1]^6\}.
	\]
\end{lemma}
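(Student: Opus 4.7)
The plan is to take an arbitrary convex body $K\subset\E^3$, put it in the normalized position already fixed in the paragraphs preceding the lemma, and convert a covering of the combinatorial set $E\cup V_p$ into a covering of $K$ by translates of $\intr(K)$.

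Starting from the normalization ($C=[0,1]^3$ is the minimum-volume parallelotope containing $K$, and $V_p\subset K$ with $O_p=\conv(V_p)\subset K$), let $N=C(E\cup V_p,\P)$ for the value of $p$ associated to $K$, and fix a covering $\{P_i+t_i\}_{i=1}^N$ of $E\cup V_p$ by translates of boxes in $\P$. For each $i$, \cite{Pa}*{Lemma~3(b)} supplies $s_i$ with $P_i+s_i\subset\intr(O_p)\subset\intr(K)$; set $w_i:=t_i-s_i$. The goal is to prove
\[
K \subset \bigcup_{i=1}^{N}(\intr(K)+w_i),
\]
which immediately gives $C(K,\intr(K))\le N\le\max_{p\in[0,1]^6}C(E\cup V_p,\P)$, as required.

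The argument rests on the natural decomposition of $C$ induced by $O_p$: besides $O_p$ itself, the cube splits into eight \emph{corner tetrahedra} $T_v$, one per vertex $v$ of $C$, where $T_v$ has $v$ together with the three points of $V_p$ on the faces of $C$ incident to $v$ as its four vertices. Since $O_p\subset K\subset C$, we obtain $K=O_p\cup\bigcup_v(K\cap T_v)$. For each vertex $v$ of $C$, I would select $i(v)$ with $v\in P_{i(v)}+t_{i(v)}$ (such an index exists since $v\in E$). The task then reduces to showing $K\cap T_v\subset\intr(K)+w_{i(v)}$, i.e., $x-w_{i(v)}\in\intr(K)$ for every $x\in K\cap T_v$. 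The special case $x=v$ is immediate: $v-w_{i(v)}=v-t_{i(v)}+s_{i(v)}\in P_{i(v)}+s_{i(v)}\subset\intr(O_p)$. For general $x\in K\cap T_v$, I would write $x$ as a convex combination of $v$ and the three points $q_{ij}$ of $V_p$ adjacent to $v$, then use the convexity of $K$ together with the strict inequality in the definition of $\P$ (which keeps $w_{i(v)}$ short enough that each $q_{ij}-w_{i(v)}$ still lies in $K$). The pieces of $K\cap O_p$ are handled analogously using the boxes in the covering which meet the points of $V_p$.

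The principal obstacle is verifying $x-w_{i(v)}\in\intr(K)$ when $x$ is near the base of $T_v$ (i.e., on the face of $O_p$ opposite $v$). This is precisely where the minimum-volume property of $C$ (which forces $K$ to touch every face of $C$ and so controls its shape relative to $O_p$), the perpendicularity relations built into $V_p$, and the strict sub-unit constraint on the edge lengths of $P_{i(v)}$ must act together. Once this is done for one corner, the other seven, as well as the treatment of $O_p$, follow by the same pattern.
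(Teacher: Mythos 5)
The paper does not actually prove this lemma---it is quoted verbatim from Papadoperakis \cite{Pa}*{Lemma 4} and used as a black box---so there is no in-paper proof to compare against. Judged on its own, however, your sketch has a genuine gap at its very first structural step: the decomposition of the cube.

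You claim that $C=[0,1]^3$ decomposes as $O_p\cup\bigcup_v T_v$, where $T_v=\conv\{v,q_1,q_2,q_3\}$ is the tetrahedron on the cube vertex $v$ and the three points of $V_p$ lying on the faces of $C$ incident to $v$. This is false, and the counterexample occurs already in the most symmetric case $p=(\tfrac12,\dots,\tfrac12)$: the octahedron $O_p$ has face plane $x_1+x_2+x_3=2$ opposite $v=(1,1,1)$, so the corner piece $C\setminus\intr(O_p)$ near $v$ is the large tetrahedron $\conv\{(1,1,1),(1,1,0),(1,0,1),(0,1,1)\}$ with vertices at cube vertices, not your small tetrahedron $T_v=\conv\{(1,1,1),(\tfrac12,\tfrac12,1),(\tfrac12,1,\tfrac12),(1,\tfrac12,\tfrac12)\}$. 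Concretely, the edge point $(1,1,0.9)\in E$ lies outside $O_p$ and outside every $T_v$ (for $T_{(1,1,1)}$ it violates $-x_1-x_2+x_3\ge -1$, and for $T_{(1,1,0)}$ it violates $x_3\le\tfrac12$), so $O_p\cup\bigcup_v T_v$ does not even cover the $1$-skeleton, let alone a general $K$ between $O_p$ and $C$ (take $K=C$ itself). Consequently the reduction of the problem to showing $K\cap T_v\subset\intr(K)+w_{i(v)}$ for each vertex $v$, together with a treatment of $O_p$, does not cover $K$.

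Because the small tetrahedra $T_v$ do not exhaust $C\setminus O_p$, writing a general $x\in K\setminus O_p$ as a convex combination of $v$ and its three neighbors in $V_p$ is not available, which is exactly the device your argument relies on. The missing region is precisely the band near the $1$-skeleton $E$ that is neither in $O_p$ nor in any $T_v$---and handling points of $K$ near edges (as opposed to vertices or faces) is the technically delicate core of Papadoperakis's proof, which proceeds by a case analysis on where a point of $\partial K$ ``projects'' onto $\partial C$ rather than by a tetrahedral subdivision. Your observation that $v-w_{i(v)}\in\intr(O_p)$ and your use of Lemma~3(b) of \cite{Pa} are on the right track, but a correct argument needs a cover of $C$ by $O_p$ together with the genuine complementary pieces $C\cap\{x:\langle n_v,x\rangle\ge c_v\}$ (one per octahedron facet), and a separate argument for those pieces, which are not simplices in general and are not handled by convex combinations of four fixed points.
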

The family of covering problems $C(E\cup V_p,\P)$ is $6$-parametric which is more tangible than $C(K,\intr(K))$ involving a general convex body in $\E^3$.

The proof of~\cite{Pa}*{Lemma~4} actually shows that
\[
C(K,\intr(K)) \le \max\{ C(E\cup V_p,\intr(K)): p\in[0,1]^6\},
\]
so by $O_p\subset K$, $p\in[0,1]^6$, we immediately obtain the following stronger lemma.
\begin{lemma}\label{lem:pap-strong}
	For any convex body $K$ in $\E^3$
	\[
	C(K,\intr(K)) \le \max\{ C(E\cup V_p,\intr(O_p)): p\in[0,1]^6\}.
	\]
\end{lemma}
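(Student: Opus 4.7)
The plan is to prove \cref{lem:pap-strong} in two steps: first verify the intermediate inequality
\[
C(K,\intr(K)) \le \max\{ C(E\cup V_p,\intr(K)): p\in[0,1]^6\}
\]
by inspecting the proof of \cite{Pa}*{Lemma~4}, and then upgrade from $\intr(K)$ to $\intr(O_p)$ via the inclusion $O_p\subset K$ noted in the text.

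For the first step, I would revisit Papadoperakis's proof and verify that the specific rectangular-box structure of elements of $\P$ plays no essential role. In that argument one fixes $P\in\P$ and a covering of $E\cup V_p$ by $N$ translates $P+u_1,\dots,P+u_N$; each $P+u_i$ is contained in a corresponding translate $\intr(K)+w_i$ by the strong form of \cite{Pa}*{Lemma~3(b)} recalled above. The subsequent geometric argument that extends this collection into a covering of $K$ itself should use only the $\intr(K)+w_i$'s, with $P$ serving purely as a witness inside them. I would check that this argument goes through verbatim when the $P+u_i$'s are replaced by arbitrary translates of $\intr(K)$ that cover $E\cup V_p$, yielding the intermediate inequality.

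For the second step, observe that $V_p\subset K$ by the choice of the points $q_{ij}\in F_{ij}\cap K$, and so $O_p=\conv(V_p)\subset K$ by convexity of $K$. Since taking interiors is monotone, $\intr(O_p)\subset\intr(K)$, and therefore any cover of $E\cup V_p$ by $N$ translates of $\intr(O_p)$ is automatically a cover by $N$ translates of $\intr(K)$. Hence
\[
C(E\cup V_p,\intr(K))\le C(E\cup V_p,\intr(O_p)) \qquad \text{for every } p\in[0,1]^6,
\]
and taking the maximum over $p$ and combining with the intermediate inequality yields the lemma.

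The main obstacle is the first step: a careful rereading of \cite{Pa} is needed to confirm that the axis-aligned box structure of $P\in\P$ is genuinely auxiliary and that the argument only uses the existence of translates of the covering sets inside $\intr(K)$. The author's phrasing (``actually shows,'' ``immediately obtain'') signals that this is a cosmetic verification rather than a substantive obstacle, but without Papadoperakis's proof in hand one cannot rule out that a mild reformulation is required to cleanly decouple the covering sets from their rectangular shape.
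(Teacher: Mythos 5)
Your proposal is correct and follows essentially the same route as the paper: the paper likewise observes that Papadoperakis's proof of Lemma~4 in fact yields the intermediate inequality with $\intr(K)$ in place of $\P$, and then upgrades to $\intr(O_p)$ via $O_p\subset K$ (hence $\intr(O_p)\subset\intr(K)$). The paper gives no more detail on the first step than you do, so your caveat about needing to recheck \cite{Pa} is fair but not a gap relative to the paper's own argument.
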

Observe that (by the proof of~\cite{Pa}*{Lemma~3(b)}) any polytope $O_p$, $p\in[0,1]^6$, contains a translate of any parallelotope $P\in\P$. Thus, \cref{lem:pap-strong} uses larger sets for covering, and \emph{this is precisely what allows to obtain the main result of this work and an improvement over~\cite{Pa}}. On the other hand, the polytopes $O_p$, $p\in[0,1]^6$, depend on specific $p$ and have more complicated structure than the universal (independent of $p$) family of the parallelotopes $\P$, so we will employ computer assistance to estimate $\max\{ C(E\cup V_p,\intr(O_p)): p\in[0,1]^6\}$. 

\begin{remark}
	It is not hard to see that $14$ is the best one can do with the suggested approach. Indeed, $\max\{ C(E\cup V_p,\intr(O_p)): p\in[0,1]^6\}\ge C(E\cup V_\p,\intr(O_\p))$, where $\p=(\frac12,\dots,\frac12)$. Consider the $14$-element set consisting of the $8$ vertices of the cube and the points $V_\p$. No translate of $\intr(O_\p)$ can cover two points in this set. Therefore, $C(E\cup V_\p,\intr(O_\p))\ge14$.
\end{remark}

To bound $\max\{ C(E\cup V_p,\intr(O_p)): p\in[0,1]^6\}$, we will partition the configuration space $[0,1]^6$ into smaller parallelepipeds $P$ of the form $P=\prod_i [a_i,a_i+\delta_i]$, where $0\le a_i<a_i+\delta_i\le 1$, $1\le i\le 6$. For each $P$, we find an independent of $p\in P$ polytope $Q_P$ that can be used for covering. Namely, we define $Q_P=\cap_{v\in U_P}O_v$ where $U_P$ is the set of all $64$ vertices of $P$. We need the following properties relating $Q_P$ and $O_p$ for $p\in P$.

\begin{lemma}\label{lem:inside}
	For any $p \in P$:\\ (i) $Q_P\subset O_p$;\\ (ii) there is a translate of $Q_P$ which is a subset of the interior of $O_p$.
\end{lemma}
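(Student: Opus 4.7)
The plan is to prove (i) by a separation argument exploiting the block structure of $A_p$, and to deduce (ii) from (i) together with a strict-width comparison of $Q_P$ and $O_p$.

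For (i), assume toward contradiction that $x \in Q_P$ and $x \notin O_p$ for some $p \in P$. By the separating hyperplane theorem, there exists $u \in \R^3$ with $\langle u, x\rangle > h_{O_p}(u) = \max_{i=1,\dots,6}\langle u, A_p e_i\rangle$. The crucial structural observation is that the six columns of $A_p$ split into three pairs, each depending affinely on a disjoint pair of $p$-coordinates: $A_p e_1, A_p e_2$ depend only on $(p_1, p_2)$; $A_p e_3, A_p e_4$ only on $(p_3, p_4)$; and $A_p e_5, A_p e_6$ only on $(p_5, p_6)$. Since $P$ is a Cartesian product of intervals respecting this decomposition and each $\langle u, A_p e_i\rangle$ is affine in the corresponding pair, we can independently minimize the two functions in each block by choosing the appropriate endpoint of the corresponding coordinate pair. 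The resulting vertex $v \in U_P$ satisfies $\langle u, A_v e_i\rangle \le \langle u, A_p e_i\rangle$ for every $i$, hence $h_{O_v}(u) \le h_{O_p}(u) < \langle u, x\rangle$, contradicting $x \in O_v$.

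For (ii), it suffices to show the Minkowski difference $O_p \ominus Q_P = \{t \in \R^3 : Q_P + t \subset O_p\}$ has non-empty interior; any interior $t$ then satisfies $Q_P + t \subset \intr(O_p)$ (since a small ball around $t$ also lies in the Minkowski difference, yielding $Q_P + t \subset O_p \ominus \epsilon B \subset \intr(O_p)$). By (i), $0 \in O_p \ominus Q_P$, so the task reduces to the strict width inequality $h_{Q_P}(u) + h_{Q_P}(-u) < h_{O_p}(u) + h_{O_p}(-u)$ for every $u \in \R^3 \setminus \{0\}$. I would prove this by combining (i) with a local analysis at the extremal vertices of $O_p$: near a column $A_p e_{i^*}$ attaining $h_{O_p}(u)$, any point $x \in Q_P$ with $\langle u, x\rangle$ close to $h_{O_p}(u)$ must simultaneously lie close to $A_v e_{i^*}$ for every $v \in U_P$; non-degeneracy $\delta_i > 0$ forces these target vertices to span a non-trivial set, ruling out such simultaneous proximity and yielding $h_{Q_P}(u) < h_{O_p}(u)$. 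For directions where the support in $u$ is insensitive to $p$, the symmetric argument applied to $-u$ supplies the strictness.

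The main obstacle is the strict-width step in (ii), because the argument of (i) yields only weak inequalities $h_{O_v}(u) \le h_{O_p}(u)$ for a single vertex $v$, whereas strict inequality is a joint consequence of the intersection over all $v \in U_P$ and cannot be witnessed by any single vertex (e.g.\ for $u = e_1$ every $h_{O_v}(e_1) = 1$ coincides with $h_{O_p}(e_1)$, yet $h_{Q_P}(e_1) < 1$). A direction-by-direction case analysis, guided by which of the six columns of $A_p$ attains the support and how the three coordinate blocks interact with $\delta_i > 0$, makes this tractable but requires care.
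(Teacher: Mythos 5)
Your argument for part (i) is correct and in fact offers a cleaner route than the paper's: the paper proves the auxiliary inclusion $O_q\cap O_r\subset O_p$ along a single coordinate segment via a one-dimensional intermediate value argument and then iterates over the six coordinates, whereas you observe directly that, for any linear functional $u$, the three pairs of columns $\{A_pe_{2k-1},A_pe_{2k}\}$ depend affinely---and, within each pair, identically up to an additive constant $u_k$---on the disjoint coordinate blocks $(p_{2k-1},p_{2k})$, so a single vertex $v\in U_P$ simultaneously minimizes all six values $\langle u,A_pe_i\rangle$. You should make explicit that $\langle u,A_pe_{2k-1}\rangle$ and $\langle u,A_pe_{2k}\rangle$ differ only by a constant, since this is precisely what guarantees one coherent choice of endpoint per block, but the idea is sound.

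Part (ii), however, contains a genuine gap. You reduce the claim to the strict width inequality $h_{Q_P}(u)+h_{Q_P}(-u)<h_{O_p}(u)+h_{O_p}(-u)$ for all $u\neq 0$, asserting that this together with $Q_P\subset O_p$ forces $O_p\ominus Q_P$ to have nonempty interior. That reduction is false in general. Take $K$ to be a regular tetrahedron and $L$ its inscribed ball: then $L\subset K$, the strict width inequality holds in every direction (the insphere's diameter is strictly less than every width of the tetrahedron), yet $K\ominus L=\{0\}$ and no translate of $L$ lies in $\intr(K)$, because for every nonzero $d$ some face of $K$ has outward normal $n$ with $\langle d,n\rangle>0$, so shifting the ball by $\varepsilon d$ pushes it through that face. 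Thus even a complete proof of strict width would not finish (ii). The paper instead exploits the specific structure of $O_p$: after reducing by symmetry to $p\in\prod_i[a_i,a_i+\delta_i/2]$, it produces a concrete inward direction by showing that $x-(\varepsilon,0,0)\in O_p$ for each $x\in Q_P$, using the comparison point $r=p+(0,0,\delta,0,\delta,0)\in P$ and the identity $A_r\lambda-A_p\lambda=(\delta\lambda',0,0)$, and then makes $\varepsilon$ uniform over $x$ via the concavity of $h(x)=\max\{\varepsilon:x-(\varepsilon,0,0)\in O_p\}$ on the polytope $Q_P$. Your sketch of the strict-width proof via local analysis at extremal vertices is also too vague to assess, but the deeper problem is that a width comparison alone cannot yield the conclusion; you need an argument that, like the paper's, identifies an explicit translation direction that works for all of $Q_P$ simultaneously.
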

\begin{proof}
	(i) We need the following fact. {\it Suppose $[q,r]$ is a segment parallel to one of the coordinate axes such that $p\in [q,r]\subset P$ for some point $p$. Then $O_q\cap O_r \subset O_p$.} Without loss of generality, we can assume that $q=(q_1,p_2,\dots,p_6)$, $r=(r_1,p_2,\dots,p_6)$ and $q_1\le p_1\le r_1$. For a point $x\in O_q\cap O_r$ there exist $\lambda,\mu\in S$ such that $x=A_q\lambda=A_r\mu$. Define $f(t):=A_p(t\lambda+(1-t)\mu)$, $t\in[0,1]$, clearly $f(t)\in O_p$ for any $t\in[0,1]$. Moreover, it is immediate that the first and the third coordinates of $f(t)$ are equal to $x_1$ and $x_3$, respectively. Now the second coordinate of $f(t)$ equals
	\[
	g(t):=t((\lambda_1+\lambda_2)p_1+\lambda_4+(\lambda_5+\lambda_6)p_6)+(1-t)((\mu_1+\mu_2)p_1+\mu_4+(\mu_5+\mu_6)p_6).
	\]
	Observe that by  $x=A_q\lambda=A_r\mu$ we have 
	\[
	(g(0)-x_2)(g(1)-x_2)=(\mu_1+\mu_2)(p_1-r_1)(\lambda_1+\lambda_2)(p_1-q_1)\le0,
	\]
	so, by continuity, $x=f(t)$ for some $t\in[0,1]$ proving the desired fact.
	
	The proof of (i) is completed by iterative application of the established fact. Namely, for arbitrary $p\in P$, the first step is
	\begin{equation}\label{eqn:1st inclusion}
	O_{(a_1,p_2,\dots,p_6)}\cap O_{(a_1+\delta_1,p_2,\dots,p_6)} \subset O_p.
	\end{equation}
	The second step is application of the fact along the second coordinate for each of the points in the left hand side of~\eqref{eqn:1st inclusion}:
	\begin{align*}
	O_{(a_1,a_2,p_3,\dots,p_6)}\cap O_{(a_1,a_2+\delta_2,p_3,\dots,p_6)} &\subset O_{(a_1,p_2,\dots,p_6)}, \\
	O_{(a_1+\delta_1,a_2,p_3,\dots,p_6)}\cap O_{(a_1+\delta_1,a_2+\delta_2,p_3,\dots,p_6)} &\subset O_{(a_1+\delta_1,p_2,\dots,p_6)},
	\end{align*}
	which, in combination with~\eqref{eqn:1st inclusion}, gives
	\[
	O_{(a_1,a_2,p_3,\dots,p_6)}\cap O_{(a_1,a_2+\delta_2,p_3,\dots,p_6)} \cap
	O_{(a_1+\delta_1,a_2,p_3,\dots,p_6)}\cap O_{(a_1+\delta_1,a_2+\delta_2,p_3,\dots,p_6)} \subset O_p.
	\]
	Continuing in this manner, we arrive at $Q_P\subset O_p$ after the sixth step.

	(ii) First we remark that $Q_P$ has no common points with the boundary of the cube. Indeed, observe that if none of $q_i$ is zero or one, then $O_q$ intersects the boundary of the cube only at the points of the set $V_q$ (recall that $V_q=A_q(\{e_1,\dots,e_6\})$). For any given point $x$ on the boundary of the cube, since all $\delta_i>0$, it is easy to choose $q\in \intr(P)$ so that $x\not\in V_q$, which proves the remark.
	
	Without loss of generality, we can assume that $p\in\prod_i [a_i,a_i+\delta_i/2]$. Next we show that for any $x\in Q_P$ there exists $\eps>0$ (possibly depending on $x$) such that $x-(\eps,0,0)\in O_p$. First consider the case $(x_2,x_3)\ne(p_1,p_2)$. We have $r=p+(0,0,\delta,0,\delta,0)\in P$ for any fixed $0<\delta<\delta_i/2$, $i=3,5$, so by (i), we can find $\lambda\in S$ such that $x=A_r\lambda$. Observe that $A_r\lambda-A_p\lambda=(\delta\lambda',0,0)$ where $\lambda':=\lambda_3+\dots+\lambda_6$. The set $A_r(\{\mu\in S:\mu_1+\mu_2=1\})$ is the segment joining $(0,p_1,p_2)$ with $(1,p_1,p_2)$ which does not contain $x$ by $(x_2,x_3)\ne(p_1,p_2)$. Therefore, we can assume that $\lambda_1+\lambda_2<1$, i.e. $\lambda'>0$. We have $O_p\ni A_p\lambda=x-(\delta\lambda',0,0)$ as required. Now consider the case $(x_2,x_3)=(p_1,p_2)$. Since $x\in Q_P$ which has no common points with the boundary of the cube, we have $x_1>0$. In addition, $x\in Q_P\subset O_p$ and $(0,p_1,p_2)=A_pe_1\in O_p$, so the segment $\{(t,p_1,p_2):0\le t\le x_1\}$ is a subset of $O_p$ which yields the required since $x_1>0$.
	
	Now it remains to show that the selection of $\eps>0$ as above can be made independent of $x\in Q_P$. Note that $Q_P$ is a closed convex polytope as an intersection of finitely many closed convex polytopes $O_v$, $v\in U_P$. Define $h:Q_P\to(0,\infty)$ by $h(x):=\max\{\eps>0:x-(\eps,0,0)\in O_p\}$. We need to show that $\inf_{x\in Q_P}h(x)>0$. It is easy to observe that the convexity of $O_p$ and $Q_P$ implies the concavity of $h$: if $x,y\in Q_P$ and $t\in[0,1]$, then $x-(h(x),0,0),y-(h(y),0,0)\in O_p$, so $O_p\ni t(x-(h(x),0,0))+(1-t)(y-(h(y),0,0))=(tx+(1-t)y)-(th(x)+(1-t)h(y),0,0)$ and $h(tx+(1-t)y)\ge th(x)+(1-t)h(y)$. Since $Q_P$ is a closed convex polytope, any point $x\in Q_P$ can be written as a convex combination of the (finitely many) vertices of $Q_P$. By the concavity of $h$ this implies $h(x)\ge \min_{w\in W_P} h(w)>0$, where $W_P$ is the set of the vertices of $Q_P$.
\end{proof}

Let $R_P=\cup_{p\in P} V_p$, i.e. the union of six rectangles on the facets of $[0,1]^3$ where the points $V_p$ vary as $p$ varies over $P$. The following is immediate by \cref{lem:inside}.

\begin{corollary}\label{cor:box}
		We have
		\[
		\max\{ C(E\cup V_p,\intr(O_p)): p\in P\} \le C(E\cup R_P, Q_P). 
		\]
\end{corollary}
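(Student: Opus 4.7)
The plan is to convert a covering of the ``universal'' set $E\cup R_P$ by translates of the small polytope $Q_P$ into a covering of the $p$-dependent set $E\cup V_p$ by the same number of translates of the larger open set $\intr(O_p)$, using both parts of \cref{lem:inside}.

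First I would fix $p\in P$ and let $N=C(E\cup R_P,Q_P)$. By the definition of $R_P$ we have $V_p\subset R_P$, hence $E\cup V_p\subset E\cup R_P$, and any covering $\{Q_P+t_i\}_{i=1}^N$ of $E\cup R_P$ already covers $E\cup V_p$. Next, \cref{lem:inside}(ii) produces a vector $w\in\E^3$ (depending on $p$) with $Q_P+w\subset\intr(O_p)$, equivalently $Q_P\subset\intr(O_p)-w$. Translating this inclusion by $t_i$ gives $Q_P+t_i\subset\intr(O_p)+(t_i-w)$, so the $N$ translates $\{\intr(O_p)+(t_i-w)\}_{i=1}^N$ of $\intr(O_p)$ still cover $E\cup V_p$. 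Hence $C(E\cup V_p,\intr(O_p))\le N$, and taking the maximum over $p\in P$ yields the claim.

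This step is essentially bookkeeping, and I do not anticipate any real obstacle: all the substantive geometric content — the uniform containment $Q_P\subset O_p$ and the existence of the translation vector $w$ sending $Q_P$ inside $\intr(O_p)$ — is already packaged in \cref{lem:inside}. What the corollary buys is that the right-hand side $C(E\cup R_P,Q_P)$ involves the single pair $(R_P,Q_P)$ that is independent of the continuous parameter $p\in P$, which is exactly what reduces the original $6$-parametric family of covering problems on $P$ to one combinatorial covering problem per box, amenable to the exact linear-programming feasibility checks to be carried out in \cref{sec:comp}.
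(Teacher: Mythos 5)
Your proof is correct and is essentially the argument the paper leaves implicit (the paper simply says the corollary is ``immediate by \cref{lem:inside}''): use $V_p\subset R_P$ to shrink the target set, then by \cref{lem:inside}(ii) replace each translate of $Q_P$ with a translate of $\intr(O_p)$ containing it. One small remark: your opening sentence announces using ``both parts'' of \cref{lem:inside}, but the argument you give relies only on part~(ii); part~(i) is not actually invoked.
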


 With the goal of obtaining $C(E\cup R_P, Q_P)\le 14$, we need to introduce certain structure of such covers and describe $14$ translates of $Q_P$ that will be used. For each vertex of the cube, there will be a translate of $Q_P$ covering that vertex and certain parts of each of the three adjacent edges. Each of the remaining $14-8=6$ translates covers one of the rectanlges $R_P\cap F_{ij}$ together with a ``middle'' portion of one of the edges. Observe that with this covering structure each edge from some $6$ edges would be covered by the two translates of $Q_P$ corresponding to the vertices which are the endpoints of the edge, while each edge from the other $6$ edges would be covered by three translates of $Q_P$: two corresponding to the vertices, and one covering the uncovered ``middle'' portion of the edge which also covers one of the rectangles $R_P\cap F_{ij}$. Since $Q_P$ is a \emph{closed} convex polytope and edges are one-dimensional, once the structure of the covering is fixed, it is possible to decide if such a cover exists by verifying feasibility of the corresponding system of linear inequalities which will be described below.
 
 Let $E_i=\{(v_{i1},v_{i2},v_{i3})+tu_i:t\in[0,1]\}$, $i=1,\dots,12$, be the $12$ edges of the cube, where $(v_{i1},v_{i2},v_{i3})$ is a vertex of the cube and $u_i\in\{e_1,e_2,e_3\}$. Consider any injective mapping $\tau:\{1,2,3\}\times\{0,1\}\to\{0,\dots,11\}$ which indicates for each face $F_{ij}$ middle portion of which edge will be covered together with $R_P\cap F_{ij}$. Let $\EE=\tau(\{1,2,3\}\times\{0,1\})$.
 
 For each vertex $v$ of the cube, introduce a translate vector $a(v)$. Using the half-space representation of the convex polytope $Q_P$, the condition $v\in a(v)+Q_P$ can be written as a system of linear inequalities containing the components of the vector $a$. Combining these over the vertices, we have a system of $8q$ linear inequalities on $24=8\cdot 3$ variables (three per each vertex), where $q$ is the number of facets of $Q_P$. Next, for each face $F_{ij}$, we introduce a translate $b_{ij}$ and require that $R_P\cap F_{ij}\subset b_{ij}+Q_P$. Since $R_P\cap F_{ij}$ is a rectangle and $Q_P$ is convex, it suffices to verify that each vertex of $R_P\cap F_{ij}$ belongs to $b_{ij}+Q_P$. We now have $24+6\cdot 3=42$ variables and a number of linear constraints ensuring that $v\in a(v)+Q_P$ for each vertex $v$ and that $R_P\cap F_{ij}\subset b_{ij}+Q_P$ for each face $F_{ij}$. It remains to cover the interiors of the edges $E_k$. Let $T$ be the image of $\tau$; it consists of $6$ elements. First we consider the easier case $k\not\in T$. We will cover $E_k=[v_k,v_k+u_k]$ using the translates covering the endpoints of $E_k$. We introduce a variable $t_k$ and require that $v_k+t_ku_k\in a(v_k)+Q_P$ and $v_k+t_ku_k\in a(v_k+u_k)+Q_P$ at the same time for some value of $t_k$. In this case, the point $v_k+t_ku_k$ belongs to both convex polytopes $a(v_k)+Q_P$ and $a(v_k+u_k)+Q_P$ which contain $v_k$ and $v_k+u_k$, respectively, thus $E_k=[v_k,v_k+u_k]$ is completely covered. For the case $k\in T$, we proceed similarly with the difference that two intermediary points are needed instead of one. Namely, when $k\in T$, we introduce two variables $t_k$, $s_k$ and require $v_k+t_ku_k\in a(v_k)+Q_P$, $v_k+s_ku_k\in a(v_k+u_k)+Q_P$ and $v_k+t_ku_k,v_k+s_ku_k\in b_{ij}+Q_P$, where $k=\tau(i,j)$. Altogether, we have $42+6+6\cdot2=60$ variables and $(8+4\cdot6+2\cdot 6+4\cdot 6)q=68q$ constraints. Let us denote this system of linear inequalities as $\LL(P,\tau)$ (observe that $Q_P$ depends on $P$). We have just proved the following.
 
\begin{proposition}\label{prop:main}
	If there exists an injective mapping $\tau:\{1,2,3\}\times\{0,1\}\to\{1,\dots,12\}$ such that $\LL(P,\tau)$ has a solution, then
	$
	C(E\cup R_P, Q_P)\le 14
	$.
\end{proposition}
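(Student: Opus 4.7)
The plan is to extract, from any feasible solution of $\LL(P,\tau)$, an explicit covering of $E\cup R_P$ by the $14$ translates
\[
\{a(v)+Q_P:v \text{ a vertex of the cube}\}\cup\{b_{ij}+Q_P:1\le i\le 3,\ j\in\{0,1\}\}.
\]
The guiding principle is that $Q_P$, being a finite intersection of convex polytopes $O_v$, is itself convex, so each of these $14$ translates is a convex set. This reduces the task of covering the $2$-dimensional rectangles $R_P\cap F_{ij}$ and the $1$-dimensional edges of $E$ to vertex/endpoint checks, which is exactly what the linear system $\LL(P,\tau)$ encodes.

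I would first dispatch the easy pieces. For each vertex $v$ of the cube, the constraint $v\in a(v)+Q_P$ is built into $\LL(P,\tau)$, so every vertex of the cube is covered. For each face $F_{ij}$, the locus $R_P\cap F_{ij}$ is a closed axis-aligned rectangle (swept out by $V_p\cap F_{ij}$ as $p$ ranges over $P$), and $\LL(P,\tau)$ forces all four of its corners into $b_{ij}+Q_P$; convexity of $b_{ij}+Q_P$ then gives $R_P\cap F_{ij}\subset b_{ij}+Q_P$.

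The edges split according to whether $k\in T=\tau(\{1,2,3\}\times\{0,1\})$. If $k\notin T$, the variable $t_k$ satisfies $v_k+t_ku_k\in(a(v_k)+Q_P)\cap(a(v_k+u_k)+Q_P)$, so by convexity the subsegments $[v_k,v_k+t_ku_k]$ and $[v_k+t_ku_k,v_k+u_k]$ lie in the two respective vertex translates and together cover $E_k$. If $k\in T$ with $k=\tau(i,j)$, the constraints place $v_k+t_ku_k\in(a(v_k)+Q_P)\cap(b_{ij}+Q_P)$ and $v_k+s_ku_k\in(a(v_k+u_k)+Q_P)\cap(b_{ij}+Q_P)$; the three subsegments $[v_k,v_k+t_ku_k]$, $[v_k+\min(t_k,s_k)u_k,v_k+\max(t_k,s_k)u_k]$, and $[v_k+s_ku_k,v_k+u_k]$, lying respectively in $a(v_k)+Q_P$, $b_{ij}+Q_P$, and $a(v_k+u_k)+Q_P$, then cover $E_k$. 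The only step requiring a moment's attention is this last point: since $\LL(P,\tau)$ imposes no ordering on $t_k$ versus $s_k$, one must verify in both cases $t_k\le s_k$ and $t_k\ge s_k$ that the three subsegments tile $[v_k,v_k+u_k]$, an elementary case analysis. Beyond this, the proof is a direct matching of each constraint of $\LL(P,\tau)$ to a required set inclusion, and no deeper obstacle arises.
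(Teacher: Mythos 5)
Your argument matches the paper's proof: the proposition follows by reading off each constraint of $\LL(P,\tau)$ as a vertex/corner/intermediary-point membership in one of the $14$ translates and then invoking convexity of $Q_P$ to upgrade these finitely many point memberships to coverage of the rectangles and of whole edge segments. The only thing you make explicit that the paper leaves implicit is the case split on the order of $t_k$ versus $s_k$, which is a correct and worthwhile detail to record.
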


\section{Computer verification}\label{sec:comp}

With \cref{lem:pap-strong,cor:box,prop:main} at hand, it is clear how the task of proving $C(K,\intr(K))\le14$ can be discretized and performed on a computer in a finite number of steps, provided the described covering structure works. Indeed, for a large positive integer $M$, consider the partition $\U_M$ of $[0,1]^6$ into $M^6$ congruent cubes with side length $M^{-1}$, and find a suitable $\tau$ (there are finitely many possibilities) for each cube $P\in\U_M$ so that $\LL(P,\tau)$ has a solution. Since all the coordinates of the vertices of $P$ are rational, it is easily seen that all the coefficients of the system $\LL(P,\tau)$ are rational. So, the required verification can be performed using exact computations. Such a straightforward approach will not be feasible in practice, and we will apply a number of ideas to shorten the required computation time. We remark that in our discretization all the inequalities in the systems $\LL(P,\tau)$ are non-strict, which is crucial for applicability of linear programming techniques. Although \cref{lem:pap-strong} calls for covers using the interiors of certain polytopes, the transition from open sets to the closed sets was achieved in \cref{lem:inside}~(ii).

First let us utilize the available symmetries. The collection of $6$-tuples of points $\{V_p:p\in[0,1]^6\}$ from $\E^3$ (in fact, from the boundary of the cube) is invariant under symmetries of the cube. These symmetries can be obtained as compositions of the following two transformations: interchange of any two coordinates $x_i \longleftrightarrow x_j$ and mappings of the type $x_i \longleftrightarrow 1-x_i$. Using the latter, we can assume $p_1,p_2,p_3\in[0,\frac12]$. Then using the former, we interchange the order of the rows in the matrix $A_p$ to assume that $p_2+p_4\le p_1+p_6\le p_3+p_5$. Denote $D:=\{p\in[0,\tfrac12]^3\times[0,1]^3:p_2+p_4\le p_1+p_6\le p_3+p_5\}$. Due to the symmetries described above, we have
\begin{equation}\label{eqn:sym-cover}
\max\{ C(E\cup V_p,\intr(O_p)): p\in[0,1]^6\}=\max\{ C(E\cup V_p,\intr(O_p)): p\in D\}.
\end{equation}
Next we cover $D$ with certain cubes from $\U_{2M}$. With $h:=\tfrac{1}{2M}$, define
\begin{align*}
	\U_{2M}^D:=\bigcup \Bigl\{\prod_{i=1}^6 [k_ih,(k_i+1)h]:\ &0\le k_1,k_2,k_3\le M-1,\ 0\le k_4,k_5,k_6\le 2M-1, \\
	& k_2+k_4\le k_1+k_6+1\le k_3+k_5+2  \Bigr\}. 
\end{align*}
We claim that $D\subset \U_{2M}^D$. Indeed, given $p\in D$, define $k_i:=\lfloor p_i/h\rfloor$. If $p_i=\tfrac12$ for $i=1,2,3$, modify $k_i$ by setting $k_i:=M-1$, and if $p_i=1$ for $i=4,5,6$, modify $k_i$ by setting $k_i:=2M-1$. Then $k_ih\le p_i\le (k_i+1)h$ (in particular, $p$ belongs to the cube $\prod_{i=1}^6 [k_ih,(k_i+1)h]$), while $p_i=(k_i+1)h$ only if $p_i=\tfrac12$ for $i=1,2,3$ or  $p_i=1$ for $i=4,5,6$. Let us now establish that $k_2+k_4\le k_1+k_6+1$, the inequality $ k_1+k_6\le k_3+k_5+1$ is completely similar. If $k_1=M-1$ and $k_6=2M-1$, then clearly $k_2+k_4\le (M-1)+(2M-1)= k_1+k_6$. Otherwise, we have the strict inequality $p_1+p_6<(k_1+1)h+(k_6+1)h$, which, combined with $k_2h+k_4h\le p_2+p_4\le p_1+p_6$, proves the desired $k_2+k_4\le k_1+k_6+1$.

We use computer assistance to prove the following proposition which implies \cref{thm:main} due to~\eqref{eqn:sym-cover} and $D\subset \U_{20}^D$.

\begin{proposition}
	There exists a collection $\{(P_j,\tau_j)\}_{j=1}^{4660035}$ of pairs $(P_j,\tau_j)$ where $P_j\subset [0,1]^6$ is a rectangular parallelepiped and $\tau_j:\{1,2,3\}\times\{0,1\}\to\{1,\dots,12\}$ is an injective mapping such that $\cup_j P_j=\U_{20}^D$ and $\LL(P_j,\tau_j)$ has a solution for every $j$.
\end{proposition}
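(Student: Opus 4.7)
Since the statement asserts the existence of a finite certificate --- a specific list of $(P_j,\tau_j)$ pairs --- the proof is to produce such a list and then verify, for each pair, that $\LL(P_j,\tau_j)$ has a rational solution and that the $P_j$ cover $\U_{20}^D$. The verification is entirely an exact-arithmetic computation because every vertex of every $O_v$ with $v\in U_{P_j}$ has rational coordinates (with denominators dividing $20$ on the base partition $\U_{20}^D$, and small multiples thereof after subdivision). The main task is thus twofold: first, a search procedure to produce a candidate list; second, an independently checkable verification that the list works.

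For the search I would proceed by recursive refinement. Start with the base collection $\U_{20}^D$. For each cube $P$, iterate candidate injective maps $\tau$ in a heuristic order (described below) and test feasibility of $\LL(P,\tau)$; record the first successful pair. Any cube for which no $\tau$ works is bisected along one or more coordinates and the procedure is recursed on the pieces. The resolution $M=10$ and the subdivision strategy are chosen empirically so that every leaf succeeds; this is precisely the computational discovery that $\U_{20}^D$ together with a modest amount of further refinement suffices, producing the stated $4{,}660{,}035$ leaves.

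For verification of a single pair $(P,\tau)$ the steps are: (1)~compute each $O_v$, $v\in U_P$, as the convex hull of the six rational columns of $A_v$ and exactly convert to H-representation; (2)~form $Q_P=\bigcap_{v\in U_P}O_v$ in H-representation by concatenating facet systems and pruning redundancies via exact LP; (3)~assemble the $60$ variables and $68q$ inequalities defining $\LL(P,\tau)$ according to the recipe preceding \cref{prop:main}; (4)~decide feasibility exactly, using an exact-arithmetic simplex solver (e.g.\ \textsc{QSopt\_ex} or \textsc{SoPlex} in iterative-refinement mode) or, equivalently, by exhibiting a rational feasible point that is rechecked by substitution.

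Two practical issues are dominant. First, enumerating all $12\cdot 11\cdots 7=665{,}280$ injections $\tau$ per cube is prohibitive, so $\tau$'s must be tried in a guided order --- geometrically, the face $F_{ij}$ is naturally matched with an edge lying opposite to $F_{ij}$, and the residual symmetries of the cube surviving the reduction~\eqref{eqn:sym-cover} can be exploited to transport feasibility certificates between equivalent cubes. Second, and more importantly, the sheer scale --- roughly $4.6\cdot 10^6$ exact LPs with $60$ variables and a few thousand constraints each, preceded by as many exact convex-hull computations --- demands careful control of rational bit-size, massive parallelization across cubes, and a compact on-disk certificate format so that the final verification can be independently rerun. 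These are the principal engineering obstacles; the mathematical content has already been supplied by \cref{prop:main} and \cref{lem:inside}.
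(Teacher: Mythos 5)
Your proposal is essentially the same as the paper's: a recursive refinement search over boxes starting from $\U_{20}^D$, producing a certificate list $\{(P_j,\tau_j)\}$ which is then verified by exact-arithmetic rational linear programming (the paper additionally prescreens with floating point before the exact recheck). One small geometric correction: the paper restricts $\tau(F)$ to the four edges \emph{contained in} $F$ (yielding only $1496$ injective candidates per box, with a ``short list'' of $16$ tried first) rather than matching $F$ with an edge opposite to it as you suggest --- the translate covering $R_P\cap F$ sits near $F$ and so naturally also covers a middle portion of one of $F$'s own edges, not an edge on the far side of the cube.
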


\begin{proof}
	We begin with $1882010$ boxes from $\U_{20}^D$ and attempt to find an appropriate $\tau_j$ for each box. There are $1496$  possible choices for each mapping $\tau_j$ if we restrict ourselves to selecting $\tau_j(F)$ as one of the four edges contained in a face $F$ (keeping in mind that the resulting mapping should be injective, i.e. no edge should be selected twice). By working with randomly chosen boxes, we have selected a set of $16$ possible choices for $\tau_j$ one of which worked often. Let us call these $16$ possibilities a short list. 
	
	The algorithm constructing the required collection of boxes and mappings proceeds as follows. First we split the box $[0,\tfrac12]^3\times[0,1]^3$ containing $D$ into $512$ smaller boxes \[
	\prod_{i=1}^6[\tfrac{j_i}4,\tfrac{j_i}4+\tfrac14],
	\quad j_1,j_2,j_3\in\{0,1\}, \quad j_4,j_5,j_6\in\{0,1,2,3\}, 
	\] which we will refer to as regions. Now fix a region. For each starting box $P_j$ from $\U_{20}^D$ in the region (some regions may not contain any boxes $P_j$ from $\U_{20}^D$), test if one of the mappings in the short list works, i.e. $\LL(P_j,\tau_j)$ has a solution. More precisely, we generate $Q_P$ using rational numbers (all the vertices of $Q_P$ are from $\mathbb{Q}^3$) and exact arithmetic implying that all the coefficients in the problem $\LL(P_j,\tau_j)$ are rational. However, we first check the feasibility using a solver working with floating point numbers, which is faster than using exact arithmetic with rational numbers. In case the solver returns the ``not feasible'' result, we assume so, but if the solver returns ``feasible'' with floating point arithmetic, we double check the result using exact computations with rational numbers. Returning to the algorithm, if $\LL(P_j,\tau_j)$ is feasible for some $\tau_j$ from the short list, we are done. Otherwise, we consecutively check feasibility of $\LL(P_j,\tau_j)$ picking $\tau_j$ from the (``long'') list of all $1496$ possibilities for the mapping $\tau_j$. If a successful $\tau_j$ is found, we move it to the beginning of the long list. This approach maintains more useful mappings in the start of the list. As before, if we found a mapping $\tau_j$ such that $\LL(P_j,\tau_j)$ is feasible, we are done. Otherwise, we divide $P_j$ into two smaller boxes along one of the largest dimensions (e.g. $[0,\tfrac1{20}]\times[0,\tfrac1{20}]^5$ could be split as $[0,\tfrac1{40}]\times[0,\tfrac1{20}]^5$ and $[\tfrac1{40},\tfrac1{20}]\times[0,\tfrac1{20}]^5$) and invoke the algorithm recursively. Thus the relation $\cup_j P_j=\U_{20}^D$ is always true, and if the algorithm terminates, then all obtained pairs $(P_j,\tau_j)$ are such that $\LL(P_j,\tau_j)$ has a solution.
	
	 We supplied the SageMath scripts used and the results at~\cite{github}. We have listed the resulting boxes and the mappings for each region in the ``output'' folder.
\end{proof}
	
	We finish with a few remarks about the computations.
	The smallest boxes that needed to be used were obtained after $7$ subdivisions, i.e. have the dimensions $\tfrac1{80}\times(\tfrac1{40})^5$. As the pairs $(P_j,\tau_j)$ are already found by the above algorithm, it suffices to run the verification of feasibility of the corresponding linear programs to check the result. Each box requires approximately two seconds running time, but due to independence from other boxes, this verification can be efficiently parallelized. It is possible to complete the verification using 10 threads on a modern personal computer (i7) in about two weeks. 


{\bf Acknowledgment.} The author is grateful to the anonymous referees for the useful comments.

\begin{bibsection}
\begin{biblist}

\bib{Be}{article}{
	author={Bezdek, K\'{a}roly},
	author={Khan, Muhammad A.},
	title={The geometry of homothetic covering and illumination},
	conference={
		title={Discrete geometry and symmetry},
	},
	book={
		series={Springer Proc. Math. Stat.},
		volume={234},
		publisher={Springer, Cham},
	},
	date={2018},
	pages={1--30},
}

\bib{Bo}{article}{
	author={Boltyanski, V.},
	title={The problem of illuminating the boundary of a convex body},
	journal={Izv. Mold. Fil. AN SSSR},
	volume={76},
	date={1960},
	pages={77--84}
}

\bib{BPR}{article}{
	author={Bondarenko, A. V.},
	author={Prymak, A.},
	author={Radchenko, D.},
	title={Spherical coverings and X-raying convex bodies of constant width},
	journal={Canad. Math. Bull.},
	note={Accepted on Dec.~6, 2021},
	doi={10.4153/S0008439521001016},
}

\bib{CHMT}{article}{
	author={Marcelo Campos},
	author={Peter van Hintum},
	author={Robert Morris},
	author={Marius Tiba},
	title={Towards Hadwiger's conjecture via Bourgain slicing},
	eprint={https://arxiv.org/abs/2206.11227}
}

\bib{D}{article}{
	author={Mingyang Diao},
	title={On Hadwiger covering problem in five- and six-dimensional Euclidean spaces},
	journal={M.Sc. thesis, University of Manitoba},
	date={2022},
	eprint={http://hdl.handle.net/1993/36378}
}

\bib{Ha}{article}{
	author={Hadwiger, H.},
	title={Ungel\"{o}stes Probleme Nr. 20},
	language={German},
	journal={Elem. Math.},
	volume={12},
	date={1957},
	pages={121}
}


\bib{HSTV}{article}{
	author={Huang, H.},
	author={Slomka, B. A.},
	author={Tkocz, T.},
	author={Vritsiou, B.-H.},
	title={Improved bounds for Hadwiger's covering problem via thin-shell estimates},
	journal={J. Eur. Math. Soc.},
	date={2021-08-11},
	doi={10.4171/JEMS/1132},
	eprint={http://arxiv.org/abs/1811.12548}
}

\bib{La}{article}{
	author={Lassak, Marek},
	title={Covering the boundary of a convex set by tiles},
	journal={Proc. Amer. Math. Soc.},
	volume={104},
	date={1988},
	number={1},
	pages={269--272},
}

\bib{La2}{article}{
	author={Lassak, Marek},
	title={Covering a three-dimensional convex body by smaller homothetic
		copies},
	journal={Beitr\"{a}ge Algebra Geom.},
	volume={39},
	date={1998},
	number={2},
	pages={259--262},
}

\bib{Le}{article}{
	author={Levi, F. W.},
	title={\"{U}berdeckung eines Eibereiches durch Parallelverschiebung seines
		offenen Kerns},
	language={German},
	journal={Arch. Math. (Basel)},
	volume={6},
	date={1955},
	pages={369--370},
}

\bib{Pa}{article}{
	author={Papadoperakis, Ioannis},
	title={An estimate for the problem of illumination of the boundary of a
		convex body in $E^3$},
	journal={Geom. Dedicata},
	volume={75},
	date={1999},
	number={3},
	pages={275--285},
}

\bib{github}{article}{
	author={Prymak, A.},
	title={Data and scripts for the proof of $H_3\le 14$},
	eprint={https://github.com/andriyprm/h3atmost14/releases/tag/v1}	
}

\bib{Pr-Sh}{article}{
	author={Prymak, A.},
	author={Shepelska, V.},
	title={On the Hadwiger covering problem in low dimensions},
	journal={J. Geom.},
	volume={111},
	date={2020},
	number={3},
	pages={Paper No.42, 11},
}

\bib{Ro}{article}{
	author={Rogers, C. A.},
	title={A note on coverings},
	journal={Mathematika},
	volume={4},
	date={1957},
	pages={1--6},
}

\bib{Ro-Sh}{article}{
	author={Rogers, C. A.},
	author={Shephard, G. C.},
	title={The difference body of a convex body},
	journal={Arch. Math. (Basel)},
	volume={8},
	date={1957},
	pages={220--233},
}

\bib{Zong}{article}{
	author={Zong, Chuanming},
	title={A quantitative program for Hadwiger's covering conjecture},
	journal={Sci. China Math.},
	volume={53},
	date={2010},
	number={9},
	pages={2551--2560},
}

\end{biblist}
\end{bibsection}

\end{document}